\newcommand{\C}{\ensuremath{\mathbb{C}}}
\newcommand{\Q}{\ensuremath{\mathbb{Q}}}
\newcommand{\M}{\mathcal{M}}
\newcommand{\Mbar}{\overline{M}}
\newcommand{\Hbar}{\overline{\mathcal{H}}}
\renewcommand{\P}{\ensuremath{\mathbb{P}}}
\DeclareMathOperator{\Per}{Per}
\newcommand{\margincolor}{red}      
\definecolor{darkgreen}{rgb}{0,0.7,0}
\newcounter{margincounter}
\newcommand{\marginnum}{
\ifnum\value{margincounter}<10
\textcolor{\margincolor}{\begin{picture}(0,0)\put(2.2,2.4){\circle{9}}\end{picture}\footnotesize\arabic{margincounter}}
\else\ifnum\value{margincounter}<100
\textcolor{\margincolor}{\begin{picture}(0,0)\put(4.256,2.5){\circle{11}}\end{picture}\footnotesize\arabic{margincounter}}
\else
\textcolor{\margincolor}{\begin{picture}(0,0)\put(6.8,2.5){\circle{14}}\end{picture}\footnotesize\arabic{margincounter}}
\fi\fi
}
\newcommand{\ba}{\mathbf{a}}
\newcommand{\bb}{\mathbf{b}}
\newcommand{\bbf}{\mathbf{f}}
\renewcommand{\AA}{\mathbb{A}}
\newcommand{\CC}{\mathbb{C}}
\newcommand{\PP}{\mathbb{P}}
\newcommand{\QQ}{\mathbb{Q}}
\newcommand{\cC}{\mathcal{C}}
\newcommand{\cD}{\mathcal{D}}
\newcommand{\cE}{\mathcal{E}}
\newcommand{\cH}{\mathcal{H}}
\newcommand{\cM}{\mathcal{M}}
\theoremstyle{plain}
\newtheorem{theorem}{Theorem}
\numberwithin{theorem}{section}
\newtheorem{thm}[theorem]{Theorem}
\newtheorem*{thm*}{Theorem}
\newtheorem{cor}[theorem]{Corollary}
\newtheorem{lemma}[theorem]{Lemma}
\theoremstyle{definition}
\theoremstyle{remark}
\newtheorem{rem}[theorem]{Remark}
\newtheorem{remark}[theorem]{Remark}
\begin{document}

\title{Moduli spaces of quadratic maps: arithmetic and geometry}
\author{Rohini Ramadas}
\email{rohini.ramadas@warwick.ac.uk}
\address{Warwick Mathematics Institute, University of Warwick, Coventry, UK}
\date{}
\begin{abstract}
We establish an implication between two long-standing open problems in complex dynamics. The roots of the $n$-th Gleason polynomial $G_n\in\QQ[c]$ comprise the $0$-dimensional moduli space of quadratic polynomials with an $n$-periodic critical point. $\Per_n(0)$ is the $1$-dimensional moduli space of quadratic rational maps on $\P^1$ with an $n$-periodic critical point. We show that if $G_n$ is irreducible over $\QQ$, then $\Per_n(0)$ is irreducible over $\CC$. To do this, we exhibit a $\QQ$-rational smooth point on a projective completion of $\Per_n(0)$, using the admissible covers completion of a Hurwitz space. In contrast, the Uniform Boundedness Conjecture in arithmetic dynamics would imply that for sufficiently large $n$, $\Per_n(0)$ itself has no $\QQ$-rational points.
\end{abstract}
\maketitle

\section{Introduction}
The \emph{$n$th Gleason polynomial} $G_n\in\QQ[c]$ is the polynomial whose roots are the set of $c\in\CC$ such that, under $f_c(z)=z^2+c$, the critical point $0$ has a periodic orbit of exact period $n$. A long-standing open question in complex dynamics asks whether $G_n$ is irreducible over $\QQ$ for all $n$, see \cite{Milnor2014,buff2018postcritically,  goksel2020orbit,buff2021factoring}.

The roots of $G_n$ comprise the $0$-dimensional moduli space of quadratic polynomials having an $n$-periodic critical point. Milnor \cite{Milnor1993} initiated the study of a natural generalization: the curve $\Per_n(0)$ parametrizing quadratic rational maps with an $n$-periodic critical point. Another long-standing open question asks whether $\Per_n(0)$ is irreducible over $\CC$ for all $n$.

\begin{theorem}\label{thm:gleasontopern}
Fix $n\ge 4$. If $G_n$ is irreducible over $\QQ$, then $\Per_n(0)$ is irreducible over $\CC$.
\end{theorem}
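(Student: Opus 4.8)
The strategy combines standard Galois descent with one explicit boundary point. Write $X=\Per_n(0)$, an affine curve over $\QQ$ (a hypersurface in $M_2\cong\AA^2$), and let $\Pi\subset X$ denote the \emph{polynomial locus}: the set of $f\in X$ conjugate to some $z\mapsto z^2+c$. Such an $f$ has trivial automorphism group and is determined by $c$, and the roots of $G_n$ are simple (Gleason), so $\Pi$ is the reduced subscheme $\Spec(\QQ[c]/(G_n))$. If $G_n$ is irreducible over $\QQ$, then $\Pi$ is a single closed point with residue field the degree-$d$ number field $K=\QQ[c]/(G_n)$, where $d=\deg G_n$; over $\overline{\QQ}$ it becomes a single transitive $\Gal(\overline{\QQ}/\QQ)$-orbit $\{p_1,\dots,p_d\}$. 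It then suffices to prove two things: \textbf{(a)} $X$ is irreducible over $\QQ$; and \textbf{(b)} some projective compactification $\overline{X}\supset X$, defined over $\QQ$, carries a smooth $\QQ$-rational point $b$. Indeed, $b$ lies on a unique geometric irreducible component of $\overline X$, which is therefore $\Gal(\overline{\QQ}/\QQ)$-stable; since \textbf{(a)} forces $\Gal$ to permute the geometric components of $\overline X$ transitively, and a transitive action with a fixed point is trivial, $\overline X$ --- and hence $X$ --- is geometrically irreducible, i.e.\ $X_\CC$ is irreducible.

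For \textbf{(a)} I would use two geometric inputs about $\Per_n(0)$. First, $X$ is smooth at every point of $\Pi$ (the transversality underlying the simplicity of the roots of $G_n$), so each $p_i$ lies on a \emph{unique} geometric irreducible component $C_i$ of $X$. Second, every geometric irreducible component of $X$ contains a point of $\Pi$, i.e.\ contains a quadratic polynomial. Granting these, the geometric components of $X$ are exactly $\{C_1,\dots,C_d\}$ --- any component is $C_i$ for any $p_i$ it contains --- and, as $\Gal(\overline{\QQ}/\QQ)$ permutes $\{p_i\}$ transitively, it permutes $\{C_i\}$ transitively. Thus $X$ has a single Galois orbit of geometric components, i.e.\ $X$ is irreducible over $\QQ$.

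For \textbf{(b)}, the technical heart, I would model $\Per_n(0)$ on a Hurwitz space. To a quadratic rational map $f$ with $n$-periodic marked critical point and critical orbit $\omega_0\mapsto\omega_1\mapsto\cdots\mapsto\omega_{n-1}\mapsto\omega_0$, attach the degree-$2$ branched cover $f\colon\PP^1\to\PP^1$ with the cycle $\omega_1,\dots,\omega_{n-1},\omega_0$ marked on the target (where $\omega_1=f(\omega_0)$ is a branch point) and $\omega_0,\dots,\omega_{n-1}$ marked on the source. This exhibits a dense open subset of each component of $\Per_n(0)$ inside a Hurwitz space $\H$ of pointed degree-$2$ covers of genus-$0$ curves with ramification prescribed along the marked cycle, compatibly with the forgetful map $\H\to\Mzn$ and with the map to $\Per_n(0)$. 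Passing to the admissible-covers compactification $\Hbar$ (Harris--Mumford, Abramovich--Corti--Vistoli), a proper $\QQ$-curve, and pushing forward produces the desired projective compactification $\overline{\Per_n(0)}$ over $\QQ$. The point $b$ would be the admissible cover lying over the maximally degenerate stable $n$-pointed rational curve --- a chain of $\PP^1$'s, each carrying the evident double cover, assembled so that the limiting configuration realizes the cyclic dynamics; its (finite) automorphism group and versal deformation are purely combinatorial, and from them one reads off that $b$ is $\QQ$-rational and that $\overline{\Per_n(0)}$ is smooth at $b$ (a regular one-dimensional local ring, with no second boundary branch through $b$). The hypothesis $n\ge4$ is what makes this degenerate curve, and the cover on it, exist and be rigid enough to carry out the computation.

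I expect the main obstacle to be \textbf{(b)}: setting up the Hurwitz-space model precisely --- the orbit-portrait and monodromy bookkeeping along the critical cycle, and the matching of components of $\H$ with those of $\Per_n(0)$ --- then controlling automorphisms of the degenerate admissible covers and, above all, verifying that the distinguished boundary point is genuinely a \emph{smooth} point of $\overline{\Per_n(0)}$ rather than a crossing of several boundary branches. A secondary point requiring care is the second input of \textbf{(a)}, that every component of $\Per_n(0)$ meets the polynomial locus; this would either be quoted or, in the same spirit as \textbf{(b)}, extracted from $\Hbar$ by degenerating an arbitrary component to a polynomial-type admissible cover. With \textbf{(a)} and \textbf{(b)} in hand, the descent argument of the first paragraph concludes.
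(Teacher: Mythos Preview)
Your plan is correct and follows essentially the same route as the paper: Milnor's result that every component of $\Per_n(0)$ meets the polynomial locus yields $\QQ$-irreducibility (your \textbf{(a)}), and a smooth $\QQ$-rational admissible cover on the boundary of a Hurwitz-space model supplies the Galois-descent step to $\CC$-irreducibility (your \textbf{(b)}). The one point the paper makes explicit that you leave implicit is that the relevant curve is not the full admissible-covers space $\Hbar$ (which is $(n{-}2)$-dimensional) but the \emph{equalizer} of the two forgetful maps to $\Mbar_{0,n}$ recording source and target markings; the smoothness verification at the chosen chain-type boundary point is then a short computation in node-smoothing coordinates showing that the equalizer equations cut out a smooth germ not contained in the boundary.
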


Irreducibility of $G_n$ has been shown for $n\le 19$ \cite{DoyleFiliTobinGleason} using \textit{Magma}. We conclude:

\begin{cor}
For $n\le 19$, $\Per_n(0)$ is irreducible over $\CC$.
\end{cor}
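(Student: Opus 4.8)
The plan is to deduce the corollary directly from Theorem~\ref{thm:gleasontopern} together with the computational input cited from \cite{DoyleFiliTobinGleason}. First I would observe that the cases $n \le 3$ must be handled separately, since Theorem~\ref{thm:gleasontopern} only applies for $n \ge 4$: for these small values one checks irreducibility of $\Per_n(0)$ by hand or by appeal to Milnor's original analysis in \cite{Milnor1993}, where $\Per_1(0)$, $\Per_2(0)$, and $\Per_3(0)$ are each known to be irreducible (indeed rational) curves. So the substantive range is $4 \le n \le 19$.

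For $4 \le n \le 19$, I would invoke the result of \cite{DoyleFiliTobinGleason} that $G_n$ is irreducible over $\QQ$ in this range --- this is the \textit{Magma} computation referenced just before the corollary. Then Theorem~\ref{thm:gleasontopern} applies verbatim: irreducibility of $G_n$ over $\QQ$ implies irreducibility of $\Per_n(0)$ over $\CC$. Stringing these together gives irreducibility of $\Per_n(0)$ over $\CC$ for every $n$ with $1 \le n \le 19$, which is the claim.

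The only point requiring any care is the boundary between what Theorem~\ref{thm:gleasontopern} covers ($n \ge 4$) and what the corollary asserts ($n \le 19$): one should make sure the small cases $n \in \{1,2,3\}$ are not silently assumed. I expect this to be the only ``obstacle,'' and it is a trivial one, resolved by citing the classical description of these low-period curves. No new geometry or arithmetic is needed beyond the theorem and the cited irreducibility computation.
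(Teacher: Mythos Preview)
Your proposal is correct and matches the paper's approach: the corollary is stated as an immediate consequence of Theorem~\ref{thm:gleasontopern} together with the \textit{Magma} verification from \cite{DoyleFiliTobinGleason}. Your observation that the cases $n\le 3$ fall outside the hypothesis of Theorem~\ref{thm:gleasontopern} and must be handled separately (via the classical descriptions in \cite{Milnor1993}) is a valid point of care that the paper leaves implicit.
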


The strategy is as follows. Milnor \cite{Milnor1993} shows that every irreducible component of $\Per_n(0)$ contains a polynomial, i.e. a root of $G_n$. This implies (see Corollary \ref{thm:gleasonimpliesperirreducibleoverQ}) that if $G_n$ is irreducible over $\Q$, then so is $\Per_n(0)$. In order to upgrade irreducibility over $\QQ$ to irreducibility over $\CC$, we show:

\begin{thm}\label{prop:Qrationalsmoothpoint}
For every $n\ge 4$, there exists a projective completion of $\Per_n(0)$ that has a smooth $\Q$-rational point $f_{\star}$.
\end{thm}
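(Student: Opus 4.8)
The plan is to realize a carefully chosen degenerate quadratic rational map with an $n$-periodic critical point as a point on a modular compactification of $\Per_n(0)$, and to identify that compactification with (an open part of) a Hurwitz space whose admissible-covers completion is smooth and defined over $\Q$ at the chosen point. Concretely, a quadratic rational map $f\colon\P^1\to\P^1$ has two critical points; requiring one of them, say $c_1$, to have exact period $n$ gives the curve $\Per_n(0)$. A natural source of $\Q$-points on a completion is to let the \emph{second} critical point $c_2$ collide with something, or to let the configuration of the postcritical set degenerate, so that $f$ breaks into a stable curve of lower-degree maps — exactly the phenomenon governed by the admissible covers compactification $\bar\H$ of the Hurwitz space of degree-$2$ covers of $\P^1$ with the prescribed ramification/marked-point data. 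So the first step is to set up the Hurwitz space $\H$ parametrizing degree-$2$ maps $\P^1\to\P^1$ together with the marked periodic cycle $c_1\mapsto f(c_1)\mapsto\cdots\mapsto f^{n-1}(c_1)\mapsto c_1$ and the branch points, to exhibit a dominant map $\H\to\Per_n(0)$ (a finite cover recording the extra combinatorial labelling), and to take $\bar\H$, the Knudsen--Mumford admissible covers completion, which is a smooth proper Deligne--Mumford stack (or at least smooth at the boundary points we care about) defined over $\Q$.

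The second step is to write down an explicit boundary point $f_\star\in\bar\H$. The cleanest candidate is the totally degenerate admissible cover in which the target $\P^1$ degenerates to a chain (or comb) of $\P^1$'s so that each "step" of the period-$n$ orbit lives on its own component, and the source degenerates compatibly into a chain of $\P^1$'s each carrying a degree-$2$ or degree-$1$ map; the critical point and its orbit are then a string of nodes and marked points whose cross-ratios are all $0,1,\infty$, hence manifestly $\Q$-rational. I would choose $f_\star$ to be the admissible cover associated to the "polynomial-like" degeneration, mirroring Milnor's observation that every component of $\Per_n(0)$ contains a polynomial: the relevant boundary stratum should be the one where $f$ degenerates toward $z\mapsto z^2+c$ with $c$ a root of $G_n$ "pushed to the boundary," but taken in the admissible-covers model so that no irrationality of the root enters — the combinatorial type of the degenerate cover is rigid and rational.

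The third step, and the main obstacle, is verifying \emph{smoothness} of the chosen completion at $f_\star$ — both smoothness of $\bar\H$ (or the corresponding completion of $\Per_n(0)$) as a variety/stack at that point, and that $f_\star$ is not a stacky point with extra automorphisms that would obstruct descending to a $\Q$-point of a scheme-theoretic completion. For smoothness I would compute the deformation space of the admissible cover $f_\star$: deformations of a nodal admissible cover are unobstructed and the dimension is controlled by the number of nodes of the target plus moduli of the target components, so I expect $\bar\H$ to be smooth of dimension $1$ there, and then transport smoothness along the finite map $\bar\H\to\overline{\Per_n(0)}$, taking the normalization or an appropriate partial completion of $\Per_n(0)$ if the map is ramified at $f_\star$. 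The automorphism issue is handled by checking that the totally degenerate admissible cover $f_\star$ has trivial automorphism group as a \emph{marked} object — the marked period-$n$ orbit kills the hyperelliptic involution's action on the labelling — so that a neighborhood of $f_\star$ in the stack is a scheme, giving an honest smooth $\Q$-rational point on a projective completion of $\Per_n(0)$. The remaining bookkeeping (that $\bar\H$ is defined over $\Q$, that $f_\star$ is $\Q$-rational, that the map to $\Per_n(0)$ is defined over $\Q$) is formal since admissible covers, their moduli, and the forgetful/stabilization maps are all defined over $\Z[1/N]$ for suitable $N$, and in fact over $\Q$.
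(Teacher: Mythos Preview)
Your outline has the right ingredients (Hurwitz space, admissible covers completion, an explicit degenerate cover with rational cross-ratios) but it misses the central construction of the paper, and without it the smoothness argument does not go through.

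The Hurwitz space $\cH_n$ you set up parametrizes degree-$2$ maps $f\colon C\to D$ between \emph{two different} marked genus-$0$ curves. It does \emph{not} parametrize self-maps, and there is no map $\cH_n\to\Per_n(0)$: in the paper's coordinates $\cH_n$ has dimension $n-2$, not $1$. To recover self-maps one must impose the condition that the source $(C,a_1,\dots,a_n)$ and target $(D,b_1,\dots,b_n)$ agree as marked curves. The paper does this by taking two forgetful maps $\pi_a,\pi_b\colon\cH_n\to M_{0,n}$ and defining $\cE_n=(\pi_a\times\pi_b)^{-1}(\Delta)$; it is $\cE_n$, not $\cH_n$, that is birational to $\Per_n(0)$. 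Your sentence ``exhibit a dominant map $\H\to\Per_n(0)$ (a finite cover)'' is therefore not correct, and your plan to ``transport smoothness along the finite map $\bar\H\to\overline{\Per_n(0)}$'' cannot be carried out.

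This matters precisely at the smoothness step. Unobstructedness of deformations of an admissible cover gives you smoothness of $\Hbar_n$ at $f_\star$, which is true but not what is needed. What you must show is that the closure $\overline{\cE_n}\subset\Hbar_n$ is a smooth curve at $f_\star$, i.e.\ that the equations $\pi_a^*(t_i)=\pi_b^*(t_i)$ cutting out the diagonal condition define something smooth and $1$-dimensional there, and moreover not contained in the boundary. The paper's actual work (Lemma~\ref{lem:smoothpoint}) is exactly this: one chooses $f_\star$ so that in node-smoothing coordinates $s_1,\dots,s_{n-2}$ on $\Hbar_n$ and $t_1,\dots,t_{n-3}$ on $\Mbar_{0,n}$, the pullbacks satisfy $\pi_a^*(t_i)=\alpha_i s_i$ and $\pi_b^*(t_i)=\beta_i s_{i+1}$ with $\alpha_i,\beta_i$ units. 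The resulting system $\alpha_i s_i=\beta_i s_{i+1}$ visibly cuts out a smooth curve through the origin not lying in any coordinate hyperplane. This ``shift by one'' between $\pi_a$ and $\pi_b$ is what dictates the particular combinatorial shape of $f_\star$ in Figure~\ref{fig:fstar}; it is not the polynomial/Gleason degeneration you describe, and a generic totally degenerate admissible cover will not have this property. Your proposal contains no mechanism for producing or verifying this alignment, so the smoothness claim is unsupported.
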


A $\QQ$-rational point of a $\QQ$-irreducible variety must lie on every $\CC$-irreducible component (because the Galois group acts transitively on the set of $\CC$-irreducible components, but fixes $\QQ$-rational points). On the other hand, a smooth point can lie on at most one $\CC$-irreducible component. We conclude that if $\Per_n(0)$ is irreducible over $\QQ$, then it is irreducible over $\CC$.

\subsection{Finding the smooth \texorpdfstring{$\QQ$}{Q}-rational point at infinity in a Hurwitz space} The strategy of upgrading $\QQ$-irreducibility to $\CC$-irreducibility by finding a smooth $\QQ$-rational point has been fruitful in dynamics \cite{bousch1992quelques, BuffEpsteinKoch2018}. However, in our setting, there is a major obstacle to finding a smooth $\QQ$-rational point, as follows. The Uniform Boundedness Conjecture for arithmetic dynamics would imply that for large $n$, $\Per_n(0)$ has no $\QQ$-rational point \cite{mortonsilverman1994}  --- indeed, $\Per_5(0)$ has no $\QQ$-rational point \cite{RamadasSilversmith2}. Milnor \cite{Milnor1993} originally defined $\Per_n(0)$ as a subset of the moduli space $\cM_2\cong \CC^2$ parametrizing quadratic rational maps up to conjugacy, and studied the closure of $\Per_n(0)$ in $\CC\PP^2\supset \CC^2$. The intersection of this closure with the line at infinity in $\CC\PP^2$ contains $\QQ$-rational points, but they are very singular in general \cite{Stimson1993, Milnor1993}.

We circumvent this obstacle by instead considering a birational model $\cE_n$ of $\Per_n(0)$ that embeds naturally in a Hurwitz space $\cH_n$ (\cite{Epstein2009, HironakaKoch2017}, see Section \ref{sec:moduli}). The Hurwitz space $\cH_n$ has a completion $\Hbar_n$ whose points correspond to \textit{admissible covers}, which are branched coverings of nodal curves \cite{HarrisMumford1982, AbramovichCortiVistoli2003}. We apply techniques developed in \cite{RamadasSilversmith2} to study the completion $\overline{\cE_n}$ of $\cE_n$ in $\Hbar_n$, and find a smooth $\QQ$-rational point $f_{\star}\in\overline{\cE_n}\setminus\cE_n$. As $f_{\star}$ is an admissible cover, it admits an interpretation as a ``degenerate quadratic map" (see Figure \ref{fig:fstar} and Section \ref{sec:fstar}). The primary novel aspect of the present paper is the use of this modular interpretation to establish both $\QQ$-rationality and smoothness of $f_{\star}$. This is a proof-of-concept for the usefulness of moduli spaces of admissible covers in studying dynamics on $\PP^1$.

\begin{rem}
This technique appears to generalize well. We have found two other smooth $\QQ$-rational points on $\overline{\cE_n}$, as well as a smooth $\QQ$-rational point on an analogous completion of the space of quadratic rational maps on $\P^1$ with a pre-periodic critical point of pre-period $2$ (and arbitrary period).
\end{rem}

\subsection{Notes and references} The moduli space $\Per_n(0)$ is an example of a \textit{critical-orbit-relation} space of rational maps, i.e. it parametrizes conjugacy classes of rational maps of fixed degree, with a prescribed condition on a subset of critical orbits. Critical-orbit-relation spaces have been conjectured by Baker-DeMarco \cite{BakerDeMarco2013} to be the only families of rational maps containing a Zariski-dense subset of post-critically finite rational maps; this conjecture has been proved in special cases \cite{BakerDeMarco2013, FavreGauthier2015, DeMarcoWangYe2015}. 

In general, very little is known about the geometry of critical-orbit-relation spaces, but irreducibility has been proved in some important infinite families. Arfeux-Kiwi \cite{arfeux2020irreducibility} proved that the analog of $\Per_n(0)$ in the space of cubic polynomials is irreducible over $\CC$, and Buff-Epstein-Koch \cite{BuffEpsteinKoch2018} proved irreducibility over $\CC$ of spaces of quadratic rational maps (as well as cubic polynomials) with a pre-fixed critical point.

Finally, we note that there are natural transcendental covering spaces of $\Per_n(0)$ (more precisely, of $\cE_n$) called \textit{deformation spaces} \cite{Epstein2009, Rees2009}, whose connectedness and contractibility have been studied in \cite{HironakaKoch2017, Hironaka2019, FirsovaKahnSelinger2016}.

\subsection{Acknowledgements} This project was carried out at the Spring 2022 special semester in Complex Dynamics at the Mathematical Sciences Research Institute (MSRI). I am grateful to MSRI, the program organizers, and the program members for providing a vibrant research environment. I'm especially grateful to Curtis McMullen for bringing Theorem \ref{thm:everycomponentofperncontainsagleasonroot} (\cite{Milnor1993}) to my attention, to Laurent Bartholdi for explaining Milnor's argument, and to both for useful comments on this manuscript. I'm also grateful to Rob Silversmith for useful conversations, typesetting the figures, and for useful feedback on this manuscript.

\begin{figure}
    \centering
    \begin{tikzpicture}[scale=1.5]
   \foreach \x in {1,2,3,5.5,6.5,7.5} {
     \draw (\x+.5,-1) circle(.5);
     \draw (\x+.5,1.5) ellipse(.5 and .2);
     \draw (\x+.5,3) ellipse(.5 and .2);
   }
   \draw (4.75,-1) node {$\cdots$};
   \draw (4.75,1.5) node {$\cdots$};
   \draw (4.75,3) node {$\cdots$};
   \draw (.5,-1) circle(.5);
   \draw (.5,2.25) ellipse(.66 and 1.12);
   \draw[->,thick] (4,.6)--(4,-.2);
   \draw (4,.2) node[right] {$f_\star$};
   \draw (1,-.5) node {$\theta_1$};
   \draw (2,-.5) node {$\theta_n$};
   \draw (3,-.5) node {$\theta_{n-1}$};
   \draw (6.5,-.5) node {$\theta_5$};
   \draw (7.5,-.5) node {$\theta_4$};
   \draw (1,1.2) node {$\eta_n$};
   \draw (2,1.2) node {$\eta_{n-1}$};
   \draw (3,1.2) node {$\eta_{n-2}$};
   \draw (6.5,1.2) node {$\eta_4$};
   \draw (7.5,1.2) node {$\eta_3$};
   \draw (1,3.3) node {$\eta_n'$};
   \draw (2,3.3) node {$\eta_{n-1}'$};
   \draw (3,3.3) node {$\eta_{n-2}'$};
   \draw (6.5,3.3) node {$\eta_4'$};
   \draw (7.5,3.3) node {$\eta_3'$};
   \draw (1.5,-1.7) node {$D_1$};
   \draw (2.5,-1.7) node {$D_n$};
   \draw (3.5,-1.7) node {$D_{n-1}$};
   \draw (6,-1.7) node {$D_6$};
   \draw (7,-1.7) node {$D_5$};
   \draw (8,-1.7) node {$D_4$};
   \draw (.5,-1.7) node {$D_2$};
   \draw (1.5,1.9) node {$C_n$};
   \draw (2.5,1.9) node {$C_{n-1}$};
   \draw (3.5,1.9) node {$C_{n-2}$};
   \draw (6,1.9) node {$C_5$};
   \draw (7,1.9) node {$C_4$};
   \draw (8,1.9) node {$C_3$};
   \draw (-.4,2.25) node {$C_1$};
   \draw (1.5,2.6) node {$C_n'$};
   \draw (2.5,2.6) node {$C_{n-1}'$};
   \draw (3.5,2.6) node {$C_{n-2}'$};
   \draw (6,2.6) node {$C_5'$};
   \draw (7,2.6) node {$C_4'$};
   \draw (8,2.6) node {$C_3'$};
   \draw (1.5,-1) node {$b_1$};
   \draw (2.5,-1) node {$b_n$};
   \draw (3.5,-1) node {$b_{n-1}$};
   \draw (6,-1) node {$b_6$};
   \draw (7,-1) node {$b_5$};
   \draw (8.25,-1) node {$b_3$};
   \draw (7.75,-1) node {$b_4$};
   \draw (.25,-1) node {$b_2$};
   \draw (.75,-1) node {$b_*$};
   \draw (1.5,1.5) node {$a_n$};
   \draw (2.5,1.5) node {$a_{n-1}$};
   \draw (3.5,1.5) node {$a_{n-2}$};
   \draw (6,1.5) node {$a_5$};
   \draw (7,1.5) node {$a_4$};
   \draw[gray] (8.25,1.5) node {$a_2'$};
   \draw (7.75,1.5) node {$a_3$};
   \draw[gray] (1.5,3) node {$a_n'$};
   \draw[gray] (2.5,3) node {$a_{n-1}'$};
   \draw[gray] (3.5,3) node {$a_{n-2}'$};
   \draw[gray] (6,3) node {$a_5'$};
   \draw[gray] (7,3) node {$a_4'$};
   \draw (8.25,3) node {$a_2$};
   \draw[gray] (7.75,3) node {$a_3'$};
   \draw (.25,2.25) node {$a_1$};
   \draw (.75,2.25) node {$a_*$};
   \draw (9.5,2.25) node {$C_\star$};
   \draw (9.5,-1) node {$D_\star$};
 \end{tikzpicture}
    \caption{The point $f_{\star}\in \Hbar_n$, see Section \ref{sec:fstar}.  On $C_1\cong\PP^1_{\QQ}$, coordinates are chosen so that $\eta_n$ is at ``$1$", and $\eta_{n}'$ at ``$-1$",  $a_*$ at ``$\infty$" and $a_1$ at ``$0$".}
    \label{fig:fstar}
\end{figure}

\section{Background: the moduli spaces}\label{sec:moduli}

\subsection{Quadratic rational maps} We denote by $\cM_2$ the space of quadratic rational self-maps of $\P^1$, up to conjugacy. In fact, $\cM_2$ is isomorphic to $\AA^2$ over $\C$ \cite{Milnor1993} and over $\Q$ \cite{Silverman1998}. For $n \ge 1$, let $\Per_n(0)\subsetneq \cM_2$ be the one-dimensional $\QQ$-subvariety parametrizing maps with an $n$-periodic critical point. $\Per_1(0)$ is the locus of quadratic polynomials, and is a (vertical) line in $\cM_2\cong \AA^2$. For $n\ge 2$, under the natural identification of $\Per_1(0)$ with the family $\{f_c(z)=z^2+c\}$, $\Per_n(0)\cap \Per_1(0)$ is the set of roots of  $G_n$. Lemma 4.1 of \cite{Milnor1993} establishes that for $n\ge 2$, $\Per_n(0)$ only intersects the line at infinity in $\P^2$ at points whose coordinates are roots of unity. In particular, for $n\ge 2$, $\Per_n(0)$ does not meet $\Per_1(0)$ at the line at infinity, which implies:

\begin{theorem}\label{thm:everycomponentofperncontainsagleasonroot} [Milnor; \cite{Milnor1993}; Lemma 4.1 and Theorem 4.2]
Every irreducible component of $\Per_n(0)$ has non-empty intersection with $\Per_1(0)$.
\end{theorem}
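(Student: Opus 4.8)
The plan is to compare $\Per_n(0)$ with $\Per_1(0)$ inside the projective completion $\PP^2 \supset \cM_2 \cong \AA^2$, using the elementary fact that a plane curve meets every line in $\PP^2$ together with Milnor's control of $\Per_n(0)$ near the line at infinity $L_\infty := \PP^2 \setminus \cM_2$. The case $n = 1$ is trivial ($\Per_1(0)$ is irreducible and meets itself), so assume $n \ge 2$ and fix an irreducible component $Z$ of $\Per_n(0)$. Since $\Per_n(0)$ is a curve, $Z$ is an irreducible affine plane curve whose closure $\bar Z \subseteq \PP^2$ is an irreducible projective plane curve of some degree $d \ge 1$, with $\bar Z \cap \cM_2 = Z$. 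Note $\bar Z \ne \overline{\Per_1(0)}$: equality would force $Z = \Per_1(0)$, hence $\Per_1(0) \subseteq \Per_n(0)$, i.e. $0$ would be an $n$-periodic point of $f_c(z) = z^2 + c$ for every $c$ --- false already at $c = 0$, where $0$ is fixed.

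Next I would note that $\bar Z$ meets the line $\overline{\Per_1(0)}$ in $\PP^2$: restricting a defining form of $\bar Z$ to $\overline{\Per_1(0)} \cong \PP^1$ gives a homogeneous polynomial of degree $d$ in two variables, nonzero because $\overline{\Per_1(0)} \not\subseteq \bar Z$ (by the previous paragraph and irreducibility of $\bar Z$), which therefore has a zero. So $\bar Z \cap \overline{\Per_1(0)} \ne \emptyset$. Since $\bar Z \cap \cM_2 = Z$ and $\overline{\Per_1(0)} \cap \cM_2 = \Per_1(0)$, it then suffices to show that $\bar Z \cap \overline{\Per_1(0)}$ avoids $L_\infty$, for then $Z \cap \Per_1(0) = \bar Z \cap \overline{\Per_1(0)} \ne \emptyset$.

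To rule out intersection at infinity, I would combine two facts. First, $Z \subseteq \Per_n(0)$ gives $\bar Z \cap L_\infty \subseteq \overline{\Per_n(0)} \cap L_\infty$, and by Milnor's Lemma 4.1 (recalled above) this is a finite set of points of $L_\infty$ whose coordinates are roots of unity. Second, in Milnor's coordinates $(\sigma_1, \sigma_2)$ on $\cM_2$ --- the first two elementary symmetric functions of the three fixed-point multipliers --- the polynomial $f_c$ has multipliers $0, 2\alpha, 2\beta$ at its fixed points $\infty, \alpha, \beta$, where $\alpha + \beta = 1$; hence $\sigma_1 = 2$ identically on $\Per_1(0)$, so $\Per_1(0)$ is the line $\{\sigma_1 = 2\}$, whose unique point at infinity is $[\sigma_1 : \sigma_2 : 1] = [0 : 1 : 0]$. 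One then checks, against Milnor's explicit list of the points of $\overline{\Per_n(0)} \cap L_\infty$, that $[0:1:0]$ is not among them; this is precisely the ``in particular'' step in the passage above. Granting this, $\bar Z \cap \overline{\Per_1(0)} \cap L_\infty = \emptyset$ and the argument is complete.

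The one genuinely substantial ingredient is Milnor's Lemma 4.1: identifying the points of $L_\infty$ lying on $\overline{\Per_n(0)}$, which amounts to analyzing how a one-parameter family of quadratic rational maps carrying a critical $n$-cycle degenerates as it leaves $\cM_2$ --- but that is the result we are permitted to assume, and everything after it is the short projective-geometry bookkeeping above. The only other point to verify is that $\Per_n(0)$ has no zero-dimensional irreducible components, so that ``component'' always means ``curve''; this is standard and implicit in treating $\Per_n(0)$ as a curve.
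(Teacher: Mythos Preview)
Your argument is correct and is precisely the approach the paper sketches in the paragraph preceding the theorem: invoke Milnor's Lemma~4.1 to see that $\overline{\Per_n(0)}\cap L_\infty$ consists of the ``root-of-unity'' points, observe that the unique point of $\overline{\Per_1(0)}$ on $L_\infty$ is not among them, and conclude via the elementary fact that an irreducible projective plane curve meets every line. Your write-up simply fills in the details (the B\'ezout/restriction step, the check that $Z\ne\Per_1(0)$, and the computation $\Per_1(0)=\{\sigma_1=2\}$) that the paper leaves implicit; the only cosmetic slip is the notation ``$[\sigma_1:\sigma_2:1]=[0:1:0]$'', where you mean the point at infinity in the homogeneous coordinates $[\sigma_1:\sigma_2:w]$.
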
 

As an immediate consequence, we obtain:

\begin{cor}\label{thm:gleasonimpliesperirreducibleoverQ}
If $G_n$ is irreducible over $\Q$, then so is $\Per_n(0)$.  
\end{cor}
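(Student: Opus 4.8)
The plan is a Galois-descent argument. Since $\Per_n(0)$ is defined over $\Q$, the absolute Galois group $\Gamma=\Gal(\bar{\Q}/\Q)$ permutes the $\C$-irreducible components of $\Per_n(0)$, and $\Per_n(0)$ is irreducible over $\Q$ precisely when this action is transitive; likewise, $\Gamma$ acts transitively on the roots of $G_n$ precisely when $G_n$ is irreducible over $\Q$. So it suffices to produce a $\Gamma$-equivariant \emph{surjection} from the set of roots of $G_n$ onto the set of $\C$-irreducible components of $\Per_n(0)$: transitivity on the source then forces transitivity on the target.

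To build this surjection I would proceed in three steps. First, record that, as a closed subscheme of $\Per_1(0)\isom\A^1$, the intersection $Z:=\Per_n(0)\intersection\Per_1(0)$ is the \emph{reduced} vanishing locus of $G_n$, so its $\bar{\Q}$-points are exactly the roots of $G_n$. Second, I claim each root $c_0$ of $G_n$ lies on a \emph{unique} $\C$-irreducible component of $\Per_n(0)$: if two components passed through $c_0$, or a single component were tangent to $\Per_1(0)$ there, then $Z$ would fail to be reduced at $c_0$. This produces a well-defined map $\phi$ from the roots of $G_n$ to the $\C$-irreducible components of $\Per_n(0)$, sending $c_0$ to the unique component through it, and $\phi$ is $\Gamma$-equivariant because $\Per_n(0)$, $\Per_1(0)$, and the Galois orbits of the components are all defined over $\Q$. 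Third, $\phi$ is surjective: by Theorem \ref{thm:everycomponentofperncontainsagleasonroot} every $\C$-irreducible component of $\Per_n(0)$ meets $\Per_1(0)$, hence contains some root of $G_n$, which $\phi$ sends to it.

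Granting these steps the conclusion is immediate: if $G_n$ is irreducible over $\Q$ then $\Gamma$ acts transitively on the roots of $G_n$, hence — $\phi$ being equivariant and onto — transitively on the $\C$-irreducible components of $\Per_n(0)$, so $\Per_n(0)$ is irreducible over $\Q$.

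I expect the main obstacle to be the second step, the only place where the local geometry of $\Per_n(0)$ enters: one must rule out a multi-branch singularity of $\Per_n(0)$, and a tangency of $\Per_n(0)$ with $\Per_1(0)$, at a root of $G_n$. This input cannot simply be dropped — two $\Q$-rational curves meeting along a conjugate pair of points already form a variety that is reducible over $\Q$ even though every component meets the (irreducible over $\Q$) scheme consisting of those two points. It should, however, follow from the fact, implicit in Milnor's construction of $\Per_n(0)$ and in the normalization of $G_n$ as a squarefree polynomial, that the scheme-theoretic intersection $\Per_n(0)\intersection\Per_1(0)$ is exactly the reduced subscheme cut out by $G_n$; alternatively, one may invoke smoothness of $\Per_n(0)$ along the polynomial locus $\Per_1(0)$.
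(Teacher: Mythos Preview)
Your argument is correct, and you have correctly isolated the one nontrivial input: that the scheme-theoretic intersection $\Per_n(0)\cap\Per_1(0)$ is reduced (equivalently, that the squarefree defining equation of $\Per_n(0)$ restricts on $\Per_1(0)$ to $G_n$ itself rather than a higher power). With that in hand, your Galois-equivariant surjection from roots to $\CC$-components is well-defined and the conclusion follows.

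The paper, however, takes a shorter and more elementary route. Rather than passing to $\bar{\Q}$ and invoking the Galois action on geometric components, it stays entirely over $\Q$ and argues by polynomial factorization: $\Per_n(0)$ is a hypersurface $V(P)$ in $\cM_2\cong\A^2_\Q$, and a nontrivial $\Q$-factorization $P=P_1P_2$ restricts on $\Per_1(0)$ to a factorization of $G_n$; Theorem~\ref{thm:everycomponentofperncontainsagleasonroot} guarantees each $V(P_i)$ meets $\Per_1(0)$, so neither restriction is a unit, and $G_n$ would be reducible. This is two lines rather than a three-step construction.

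The two approaches are essentially dual --- yours works with $\bar\Q$-components and descends via Galois, the paper's works directly with $\Q$-factors --- and both silently rely on the same transversality fact you flagged (the paper's phrase ``would restrict a non-trivial factorization of $G_n$'' presumes $P|_{\Per_1(0)}=G_n$ up to units). What the paper's approach buys is brevity and the avoidance of any Galois machinery; what yours buys is an explicit articulation of exactly where the geometry enters, together with your nice cautionary example showing the hypothesis cannot be dropped.
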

\begin{proof}
 We apply Theorem \ref{thm:everycomponentofperncontainsagleasonroot} to note that a non-trivial factorization of the equation cutting out $\Per_n(0)$ in $\M_2$ would restrict a non-trivial factorization of $G_n$ on $\Per_1(0)$.
\end{proof}

\subsection{The Hurwitz space} Let $M_{0,n}$ be the moduli space of $n$-pointed genus-$0$ curves, i.e., $M_{0,n}$ parametrizes tuples $(C,p_1,\ldots, p_n)$, where $C$ is a smooth genus-$0$ curve, and $p_1,\ldots, p_n\in C$ are distinct. Let $\cH_n$ be the space parametrizing tuples $$(C, D, f, a_*, a_1,\ldots, a_n, a_2',\ldots, a_n', b_*, b_1,\ldots, b_b),$$ where $C$ and $D$ are smooth genus-$0$ curves, $a_*,a_1,\ldots,a_n, a_2',\ldots, a_n'\in C$ are distinct, $b_*,b_1,\ldots,b_n\in D$ are distinct, and $f:C\to D$ is a degree-$2$ map for which $a_*$ and $a_1$ are critical and such that $f(a_*)=b_*$, $f(a_1)=b_2$, $f(a_n)=f(a_n')=b_1$, and $f(a_i)=f(a_i')=b_{i+1}$ for $i=2,\ldots, n-1$. There are two maps $\pi_a, \pi_b:\cH_n\to \M_{0,n}$, where
\begin{align*}
    &\pi_a((C, D, f, a_*, a_1,\ldots, a_n, a_2',\ldots, a_n', b_*, b_1,\ldots, b_b))=(C,a_1,\ldots, a_n)\\
    &\pi_b((C, D, f, a_*, a_1,\ldots, a_n, a_2',\ldots, a_n', b_*, b_1,\ldots, b_b))=(D,b_1,\ldots, b_n)
\end{align*}

Let $\Delta\subseteq M_{0,n}\times M_{0,n}$ be the diagonal, and $\cE_n:=(\pi_a\times \pi_b)^{-1}(\Delta)$.

\begin{lemma}[See also \cite{HironakaKoch2017, RamadasSilversmith2}]\label{lem:birtaional}
There is a birational map $\nu:\cE_n\to\Per_n(0)$, defined over $\QQ$.
\end{lemma}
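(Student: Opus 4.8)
The plan is to prove the lemma by exhibiting $\nu$ together with a birational inverse $\mu:\Per_n(0)\to\cE_n$ --- each of which is an honest isomorphism between suitable dense open subvarieties --- and then to observe that both constructions are canonical enough to commute with the Galois action, hence are defined over $\QQ$. Throughout I would use that for $n\ge 3$ the pointed curves parametrized by $M_{0,n}$ have no nontrivial automorphisms, so any isomorphism between two of them is unique.

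The first step is to build $\mu$. Given a quadratic rational map $g:\PP^1\to\PP^1$ representing a point of $\Per_n(0)$, let $a_1$ be its $n$-periodic critical point, let $a_*$ be its other critical point, and label the critical cycle $a_1\mapsto a_2\mapsto\cdots\mapsto a_n\mapsto a_1$. I would set $C=D=\PP^1$, $f=g$, $b_j:=a_j$ for $j=1,\dots,n$, $b_*:=g(a_*)$, and take $a_i'$ (for $2\le i\le n-1$) to be the residual preimage in $g^{-1}(a_{i+1})\setminus\{a_i\}$ and $a_n'$ the residual preimage in $g^{-1}(a_1)\setminus\{a_n\}$; the branching conditions in the definition of $\cH_n$ then hold by construction. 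On the dense open of $\Per_n(0)$ where the $2n$ points $a_*,a_1,\dots,a_n,a_2',\dots,a_n'$ are pairwise distinct and the $n+1$ points $b_*,b_1,\dots,b_n$ are pairwise distinct --- a Zariski-open, nonempty condition, hence the complement of a proper closed subset since $\Per_n(0)$ is a curve --- this tuple is a genuine point of $\cH_n$, and since $\pi_a$ and $\pi_b$ return the identical pointed curve $(\PP^1,a_1,\dots,a_n)$ it lies in $\cE_n=(\pi_a\times\pi_b)^{-1}(\Delta)$. Conjugating $g$ by an element of $\PGL_2$ transports the whole tuple by that same element, so the resulting point of $\cH_n$ depends only on the class $[g]\in\cM_2$; thus $\mu$ is well defined on a dense open of $\Per_n(0)$.

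Next I would describe $\nu$ and check it inverts $\mu$. A point of $\cE_n$ is a Hurwitz tuple for which $(C,a_1,\dots,a_n)\isom(D,b_1,\dots,b_n)$; let $\phi:D\to C$ be the \emph{unique} isomorphism with $\phi(b_i)=a_i$, and put $g:=\phi\circ f:C\to D\to C$. Then $g$ has degree $2$, its critical points are $a_*$ and $a_1$ (those of $f$), and the branching relations give $g(a_i)=\phi(b_{i+1})=a_{i+1}$ for $1\le i\le n-1$ and $g(a_n)=\phi(b_1)=a_1$; as the $a_i$ are already distinct in every point of $\cH_n$, the point $a_1$ is a critical point of exact period $n$, so $[g]\in\Per_n(0)$, and this defines $\nu$. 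One then checks $\nu\circ\mu=\id$ and $\mu\circ\nu=\id$ on dense opens: for the former, $\mu(g)$ has $C=D$ with $a_i=b_i$, so $\phi=\id$ and $\nu$ returns $g$; for the latter, running $\mu$ on $g=\phi\circ f$ reproduces the original tuple after transporting it along $\phi$. Finally, neither construction involves any choice --- the critical cycle is intrinsically ordered once $a_1$ is singled out as the periodic critical point, $\phi$ is unique, and the residual preimages are canonical --- so both maps commute with the action of $\Gal(\overline{\QQ}/\QQ)$ on $\overline{\QQ}$-points; hence the birational map is defined over $\QQ$.

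The step I expect to demand the most care is not any single computation but the interface with the moduli spaces: checking that $\cH_n$ and $\Per_n(0)\subset\cM_2$ may legitimately be treated as $\QQ$-varieties between which ``birational map'' has its usual meaning; that $\PGL_2$-conjugacy of quadratic maps corresponds exactly to isomorphism of Hurwitz tuples on the relevant dense open (a generic such tuple has no automorphisms, so there are no stacky obstructions); and that each locus removed --- collisions among the distinguished points, or the second critical value $g(a_*)$ landing on the critical cycle --- is a proper closed subset. Given the rigidity of $M_{0,n}$ for $n\ge 3$ and the fact that $\cM_2\isom\AA^2$ over $\QQ$, I do not expect any of this to be serious, but it is exactly where the genericity and the range of $n$ are used.
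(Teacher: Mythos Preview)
Your approach is essentially the paper's: construct $\nu$ by composing $f$ with the unique isomorphism $D\to C$ matching $b_i$ to $a_i$, and construct the inverse by marking the critical cycle, residual preimages, and second critical point/value. The paper packages $\nu$ via the universal family over $\cH_n$ and the coarse moduli property of $\cM_2$ over $\QQ$ (citing \cite{AbramovichCortiVistoli2003, Silverman1998}) rather than via Galois equivariance on points, but the content is the same.

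There is one small gap in your description of the dense open on which $\mu$ is defined. Requiring the $a$'s and $b$'s to be pairwise distinct does \emph{not} rule out the possibility that \emph{both} critical points of $g$ are $n$-periodic with disjoint cycles: in that case $a_*$ collides with none of the $a_i,a_i'$ and $b_*$ with none of the $b_i$, yet your sentence ``let $a_1$ be its $n$-periodic critical point'' is ambiguous, and the two choices yield two distinct points of $\cE_n$. So $\mu$ is $2$-valued there and your claim that the tuple ``depends only on the class $[g]\in\cM_2$'' fails. The paper handles this by defining $\Per_n(0)^{\circ}$ to be the locus where one critical point is \emph{not} $n$-periodic (and the map has no nontrivial automorphism); you should add this condition to your open set. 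It is still a proper closed condition, so nothing else in your argument changes.
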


\begin{proof}
There are universal curves $\cC$ and $\cD$ over $\cH_n$, together with sections $\ba_*,\ba_1,\ldots,\ba_n, \ba_2',\ldots, \ba_n'$ of $\cC$, and sections $\bb_*,\bb_1,\ldots,\bb_n$ of $\cD$, and a universal degree-$2$ map $\bbf:\cC\to\cD$, all defined over $\QQ$ \cite{AbramovichCortiVistoli2003}. Because $\cC$ and $\cD$ have more than $3$ disjoint sections, they are each isomorphic to $\PP^1\times\cH_n$. There is an isomorphism $\cC|_{\cE_n}\to\cD|_{\cE_n}$, defined over $\QQ$, identifying $\ba_i$ with $\bb_i$. Under this identification, $\bbf$ restricts to a family of quadratic rational self-maps of $\PP^1$ parametrized by $\cE_n$. This induces a morphism $\nu:\cE_n\to\M_2$, defined over $\QQ$ \cite{Silverman1998}. Since every rational map in the family has an $n$-periodic critical point, we have $\nu(\cE_n)\subset\Per_n(0)$. To see that $\nu$ is birational, we construct a rational inverse. Let $\Per_n(0)^{\circ}\subsetneq\Per_n(0)$ be the non-empty Zariski-open subset (defined over $\QQ$) where one critical point is not $n$-periodic and the map has no nontrivial automorphisms. On $\Per_n(0)^{\circ}$, one can mark the $n$-periodic critical point and its forward image, the inverse images of the points in the $n$-cycle, and the other critical point and its image. This induces a map $\Per_n(0)^{\circ}\to\cH_n$, which by construction is an inverse of $\nu$. 
\end{proof}

\begin{cor}\label{cor:PerntoEn}
$\Per_n(0)$ is irreducible over $\QQ$ (resp. $\CC$) if and only if $\cE_n$ is irreducible over $\QQ$ (resp.  $\CC$).
\end{cor}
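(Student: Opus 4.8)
The plan is to deduce the corollary directly from Lemma \ref{lem:birtaional}, using the general principle that a birational map of $\QQ$-varieties restricts to an isomorphism of dense open subschemes, and that this persists after base change to any extension field of $\QQ$ — in particular to $\CC$. Irreducibility, over $\QQ$ or over $\CC$, can then be transported freely between $\cE_n$ and $\Per_n(0)$ along such an open subscheme.

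In more detail, I would proceed as follows. First, unwind what Lemma \ref{lem:birtaional} provides concretely: there are dense open subschemes $U\subseteq\cE_n$ and $V\subseteq\Per_n(0)$, both defined over $\QQ$, together with an isomorphism $\nu|_U\colon U\xrightarrow{\sim}V$ over $\QQ$ (from the proof of Lemma \ref{lem:birtaional} one may take $V\supseteq\Per_n(0)^{\circ}$, and correspondingly $U$ to be its preimage). Second, for a field $K\in\{\QQ,\CC\}$, base change: $U_K\subseteq(\cE_n)_K$ and $V_K\subseteq\Per_n(0)_K$ are again dense open subschemes, and $U_K\xrightarrow{\sim}V_K$ is an isomorphism over $K$. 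Third, invoke the elementary fact that a scheme of finite type over a field is irreducible if and only if it contains a dense irreducible open subscheme: applying this to $(\cE_n)_K\supseteq U_K$ and to $\Per_n(0)_K\supseteq V_K\cong U_K$ gives the chain of equivalences ``$(\cE_n)_K$ is irreducible $\iff$ $U_K$ is irreducible $\iff$ $V_K$ is irreducible $\iff$ $\Per_n(0)_K$ is irreducible''. Taking $K=\QQ$ and then $K=\CC$ yields both assertions of the corollary.

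The only point needing a little care — the closest thing to an obstacle, though it is routine — is the bookkeeping around varieties that are a priori possibly reducible: one must check that ``dense open'' is preserved under base change, which holds because $U$ contains the generic point of every irreducible component of $\cE_n$, so $U_K$ meets every irreducible component of $(\cE_n)_K$, and likewise for $V$; and one must be sure that the open subscheme $V$ furnished by Lemma \ref{lem:birtaional} is genuinely dense in $\Per_n(0)$ rather than a nonempty open missing a component — but this is part of the meaning of ``birational'' there (equivalently, $\Per_n(0)^{\circ}$ is the complement of a proper closed subset of each component). No input beyond Lemma \ref{lem:birtaional} is required.
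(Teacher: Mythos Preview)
Your proposal is correct and matches the paper's approach: the paper states the corollary with no proof, treating it as an immediate consequence of Lemma~\ref{lem:birtaional}, and your argument simply spells out the standard fact that birational $\QQ$-varieties share a common dense open subscheme (compatibly with base change), so irreducibility over $\QQ$ or $\CC$ transfers between them.
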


\subsection{Stable curves and admissible covers} Given a finite set $S$, an \textit{$S$-marked stable genus-$0$ curve} is a connected nodal genus-$0$ curve $C$, together with an injection from $S$ into the smooth locus of $C$ such that every irreducible component has at least three points that are either marked or nodes.  $\Mbar_{0,n}$ is the Deligne-Mumford completion of $M_{0,n}$; its geometric points correspond to $n$-marked stable genus-$0$ curves. Let $\Hbar_n$ be the projective completion of $\cH_n$ whose geometric points correspond to \textit{admissible covers} \cite{AbramovichCortiVistoli2003, HarrisMumford1982}, i.e. tuples $$(C, D, f, a_*, a_1,\ldots, a_n, a_2',\ldots, a_n', b_*, b_1,\ldots, b_b),$$ where 
\begin{itemize}
    \item $C$ is a stable $\{a_*,a_1,\ldots,a_n, a_2',\ldots, a_n'\}$-marked genus-$0$ curve,
    \item $D$ is a stable $\{b_*,b_1,\ldots,b_n\}$-marked genus-$0$ curve,
    \item $f:C\to D$ is a finite degree-$2$ map for which $a_*$ and $a_1$ are critical, satisfying:
    \begin{itemize}
    \item $f(a_*)=b_*$, $f(a_1)=b_2$, and $f(a_n)=f(a_n')=b_1$, for $i=2,\ldots, n-1$, $f(a_i)=f(a_i')=b_{i+1}$,
\item nodes of $C$ map to nodes of $D$, and smooth points of $C$ map to smooth points of $D$, 
\item away from $a_*$ and $a_1$, the only ramification of $f$ is at nodes of $C$, and 
\item (Balancing condition) at each node $\eta\in C$, the two different branches at $\eta$ map to the two different branches at $f(\eta)\in D$, and map with equal local degree.
\end{itemize}
    
\end{itemize}

The maps $\pi_a$ and $\pi_b$ extend to maps from $\Hbar_n$ to $\Mbar_{0,n}$. Let $\overline{\Delta}\subseteq \Mbar_{0,n}\times \Mbar_{0,n}$ be the diagonal, and let $\overline{\cE_n}^{+}:=(\pi_a\times \pi_b)^{-1}(\overline{\Delta})$. Note that $\overline{\cE_n}^{+}$ contains the Zariski closure $\overline{\cE_n}$ of $\cE_n$, but in general also contains irreducible components supported on $\Hbar_n\setminus\cH_n$.

\section{The smooth \texorpdfstring{$\QQ$}{Q}-rational point} \label{sec:fstar}

In this section, we fix $n\ge 4$. We describe a $\QQ$-rational point $(C_{\star}, D_{\star},f_{\star})\in\Hbar_n$, as depicted in Figure \ref{fig:fstar}. Precisely:
\begin{itemize}
    \item The $C_i$s, $C_i'$s, and $D_i$s label irreducible components of $C_{\star}$ and $D_{\star}$ (depicted as ellipses), 
    \item The $a_i$s, $a_i'$s, and $b_i$s inside an ellipse are the marked points on that irreducible component,
    \item $f_{\star}$ maps $C_{\star}$ to $D_{\star}$ ``vertically", and the restriction of $f_{\star}$ to any $C_i$ or $C_i'$ \textit{except} for $C_1$ is the unique isomorphism sending marked points to marked points and nodes to nodes as depicted.
    \item On $C_1\cong\PP^1_{\QQ}$, coordinates are chosen so that $\eta_n$ is at ``$1$", and $\eta_{n}'$ at ``$-1$",  $a_*$ at ``$\infty$" and $a_1$ at ``$0$". The map $f_{\star}|_{C_1}:C_1\to D_2$ is the map $z\mapsto z^2$, with coordinates on $D_2$ so that $b_{*}$ is at $\infty$, $b_2$ is at $0$, and $\theta_1$ is at $1$.
\end{itemize}

\begin{lemma}\label{lem:Qrational}
The point $(C_{\star}, D_{\star},f_{\star})\in\Hbar_n$ is $\QQ$-rational. 
\end{lemma}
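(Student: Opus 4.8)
\emph{Proof plan.} The plan is to exhibit the admissible cover $(C_\star,D_\star,f_\star)$ explicitly as an object defined over $\QQ$: to realize $C_\star$ and $D_\star$ as nodal genus-$0$ curves over $\QQ$ with every marked point and every node $\QQ$-rational, and $f_\star$ as a $\QQ$-morphism. Since $\Hbar_n$ carries its $\QQ$-structure in such a way that an admissible cover defined over $\QQ$ determines a $\QQ$-point lying over the prescribed geometric point, this suffices; equivalently, we are checking that $(C_\star,D_\star,f_\star)$ is fixed by $\Gal(\overline{\QQ}/\QQ)$. The combinatorial skeleton of Figure~\ref{fig:fstar} --- the dual graphs of $C_\star$ and $D_\star$ (both trees, since the curves are stable of genus $0$), the distribution of the labeled marked points among the components, the pairing of branches at each node, and the induced map of dual graphs --- is purely combinatorial and hence automatically Galois-invariant, so it remains only to produce compatible $\QQ$-coordinates on the components.

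First I would coordinatize $D_\star$. On the component $D_2$ I take the coordinate specified in Section~\ref{sec:fstar}, with $b_*$ at $\infty$, $b_2$ at $0$, and the node $\theta_1$ at $1$; this makes $D_2$ into $\PP^1_\QQ$ with $\QQ$-rational special points. I then move outward through the tree: whenever a component $E$ of $D_\star$ is attached, at a node, to a part already defined over $\QQ$, that node is a $\QQ$-point of the previous component, so I may fix the coordinate on $E\cong\PP^1$ by sending that node together with two more of its special points (the remaining nodes of $E$ and its marked points $b_i$, of which there are at least three in all by stability) to $0,1,\infty$. The gluing datum at the node is then $\QQ$-rational by construction, and inductively all of $D_\star$, with its marked points and nodes, is defined over $\QQ$.

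Next I would build $C_\star$ and $f_\star$. The component $C_1\cong\PP^1_\QQ$ carries $a_*$ at $\infty$, $a_1$ at $0$, $\eta_n$ at $1$, $\eta_n'$ at $-1$, and $f_\star|_{C_1}$ is literally $z\mapsto z^2$ into $D_2$ --- all manifestly over $\QQ$ --- and here one checks the balancing condition: $z\mapsto z^2$ is unramified at $z=\pm1$ (local degree $1$), and its two branches over $\theta_1$ are the two distinct branches of $D_\star$ at that node. For every other component $X$ of $C_\star$, the description in Section~\ref{sec:fstar} pins down $f_\star|_X$ as \emph{the} isomorphism onto a component $Y$ of $D_\star$ carrying marked points to marked points and nodes to nodes; since $Y$ and its special points are already defined over $\QQ$, pulling back the coordinate makes $X$ into $\PP^1_\QQ$ with $\QQ$-rational special points and makes $f_\star|_X$ a $\QQ$-isomorphism. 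Gluing these components along their ($\QQ$-rational) nodes produces $C_\star$ over $\QQ$ and $f_\star$ over $\QQ$.

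I do not expect a genuine obstacle. The only modulus in the configuration not forced by incidence alone is that of $C_1$, which carries the four special points $a_*,a_1,\eta_n,\eta_n'$; the content of the coordinate choices recorded in Section~\ref{sec:fstar} is precisely that their cross-ratio, and the branch map $z\mapsto z^2$ emanating from $C_1$, are rational. The points requiring care are bookkeeping ones: verifying that the incidence data of Figure~\ref{fig:fstar} is internally consistent (it describes an honest admissible cover, in particular satisfying the balancing condition at every node and stability on every component), and checking that the inductive placement of coordinates on $D_\star$ is compatible with all the gluings, so that the $\QQ$-scheme produced is the claimed point of $\Hbar_n$ and not a neighbouring one. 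A slightly slicker alternative to the component-by-component induction would be to argue directly that $\Gal(\overline{\QQ}/\QQ)$ fixes $(C_\star,D_\star,f_\star)$: it fixes the combinatorial skeleton (hence each component setwise, each being distinguished by the labeled points it carries or by its position relative to the labeled components), and it fixes $C_1$ together with its map, this being $z\mapsto z^2$ between copies of $\PP^1$ with $\QQ$-rational marked points; one would then present whichever write-up is shorter.
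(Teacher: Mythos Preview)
Your proposal is correct and follows essentially the same approach as the paper's proof. The paper's argument is a one-paragraph compression of yours: it observes that every component of $D_\star$ and every component of $C_\star$ other than $C_1$ has exactly three special points (hence no moduli and automatically $\QQ$-rational), while $C_1$ carries four special points whose cross-ratio is $-1\in\QQ$; your inductive coordinatization and the Galois-fixedness alternative are just explicit ways of cashing this out.
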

\begin{proof}
The above construction was over $\QQ$. In particular, $C_1$ is the only component of $C_{\star}$ with more than three special points, and the cross-ratio of the four special points on $C_1$ is $-1$. Every component of $D_{\star}$ has exactly three special points.
\end{proof}

\begin{lemma}\label{lem:equalizer}
The point $(C_{\star}, D_{\star},f_{\star})$ lies in $\overline{\cE_n}^{+}$. 
\end{lemma}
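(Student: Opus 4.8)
The plan is to check the defining condition of $\overline{\cE_n}^{+}$ directly. Since $\overline{\cE_n}^{+}=(\pi_a\times\pi_b)^{-1}(\overline{\Delta})$, it suffices to show that the two points $\pi_a(C_{\star},D_{\star},f_{\star})$ and $\pi_b(C_{\star},D_{\star},f_{\star})$ of $\Mbar_{0,n}$ coincide. By the description of the extended maps to $\Mbar_{0,n}$, $\pi_a(f_{\star})$ is the stabilization of the $n$-pointed genus-$0$ curve obtained from $C_{\star}$ by keeping only $a_1,\dots,a_n$ (forgetting $a_*$ and $a_2',\dots,a_n'$), and $\pi_b(f_{\star})$ is the stabilization of the $n$-pointed curve obtained from $D_{\star}$ by keeping only $b_1,\dots,b_n$ (forgetting $b_*$). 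So I would compute each of these stabilizations component-by-component from Figure \ref{fig:fstar}, and then exhibit the isomorphism between the two resulting stable curves.

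First, the $D$-side. In $D_{\star}$ every component is already stable, and $D_2$ is the only component containing $b_*$; reading off the figure, the other special points of $D_2$ are $b_2$ and a single node. Forgetting $b_*$ therefore destabilizes $D_2$ alone, and contracting it slides $b_2$ onto the neighboring component of the chain. Hence $\pi_b(f_{\star})$ is the chain $D_1-D_n-D_{n-1}-\cdots-D_4$ carrying $\{b_1,b_2\}$ on $D_1$, the single mark $b_i$ on $D_i$ for $5\le i\le n$, and $\{b_3,b_4\}$ on $D_4$.

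Next, the $C$-side, where more collapses. After forgetting $a_*$, the component $C_1$ still carries $a_1$ and the two nodes $\eta_n,\eta_n'$, so it remains stable. Forgetting $a_2',\dots,a_n'$ removes the unique non-nodal mark from $C_3$ and from each of $C_3',\dots,C_n'$. Then $C_3$ has only $a_3$ and one node, so it contracts and $a_3$ slides onto $C_4$; and the entire primed chain $C_n'-C_{n-1}'-\cdots-C_3'$ becomes a string of unstable components attached to the rest only through the node $\eta_n'$ of $C_1$, so it contracts all the way to the point $\eta_n'\in C_1$, dragging its one surviving mark $a_2$ (originally on $C_3'$) with it. Hence $\pi_a(f_{\star})$ is the chain $C_1-C_n-C_{n-1}-\cdots-C_4$ carrying $\{a_1,a_2\}$ on $C_1$, the single mark $a_i$ on $C_i$ for $5\le i\le n$, and $\{a_3,a_4\}$ on $C_4$. (For $n=4$ both chains degenerate to two components, but the bookkeeping is identical.)

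Finally, the two stable $n$-pointed curves just computed are chains of the same combinatorial type in which every component carries exactly three special points, so the map matching $C_1\mapsto D_1$, $C_i\mapsto D_i$ for $4\le i\le n$, and $a_j\mapsto b_j$ for every $j$ is an isomorphism of $n$-pointed stable curves. Therefore $\pi_a(f_{\star})=\pi_b(f_{\star})$ in $\Mbar_{0,n}$, i.e. $(C_{\star},D_{\star},f_{\star})\in(\pi_a\times\pi_b)^{-1}(\overline{\Delta})=\overline{\cE_n}^{+}$. The step needing genuine care is the $C$-side stabilization: one must verify that the whole primed chain is unstable simultaneously, so that $a_2$ migrates all the way onto $C_1$ and lands exactly at the former node $\eta_n'$ — mirroring how $b_2$ lands on $D_1$ on the other side — rather than being stopped partway along the chain. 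Everything else is immediate from the special-point counts visible in Figure \ref{fig:fstar}.
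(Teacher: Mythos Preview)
Your argument is correct and is exactly the paper's approach: both proofs verify $\pi_a(f_\star)=\pi_b(f_\star)$ by computing the stabilizations of $(C_\star,a_1,\dots,a_n)$ and $(D_\star,b_1,\dots,b_n)$ and observing that they coincide (the paper records the common result as the caterpillar curve $X_\star$ of Figure~\ref{fig:Xstar}, with $\{p_1,p_2\}$ on one end-component, $\{p_3,p_4\}$ on the other, and $p_n,p_{n-1},\dots,p_5$ along the chain). You simply spell out the component-by-component contractions that the paper leaves to the figure; the only minor slip is the phrase ``the unique non-nodal mark'' applied to $C_3$ and $C_3'$, each of which carries two marks before forgetting, but your subsequent sentences make clear which mark survives and the computation is correct.
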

\begin{proof}

The common stabilization of $(C_{\star}, a_1,\ldots, a_n)$ and $(D_{\star}, b_1,\ldots, b_n)$ is the stable curve $(X_{\star},p_1,\ldots, p_n)$ depicted in Figure \ref{fig:Xstar}, so \begin{align*}
    \pi_a((C_{\star}, D_{\star},f_{\star}))&=\pi_b((C_{\star}, D_{\star},f_{\star}))=(X_{\star},p_1,\ldots, p_n)\in\Mbar_{0,n}.\qedhere
\end{align*}
\end{proof}

\begin{lemma}\label{lem:smoothpoint}
The point $(C_{\star}, D_{\star},f_{\star})$ lies in $\overline{\cE_n}$, and is a smooth point of $\overline{\cE_n}$. 
\end{lemma}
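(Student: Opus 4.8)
The plan is to show that near the admissible cover $f_\star$, the space $\overline{\cE_n}^+$ is smooth of dimension $1$, and that the unique local branch through $f_\star$ is $\overline{\cE_n}$ (rather than a spurious component supported at infinity). Since $\overline{\cE_n}^+ = (\pi_a\times\pi_b)^{-1}(\overline\Delta)$, I would first analyze the local structure of $\Hbar_n$ at $f_\star$ and the differentials of $\pi_a,\pi_b$ there. The moduli space $\Hbar_n$ of admissible covers is smooth, and its tangent space at $f_\star$ is controlled by the deformations of the cover: deformations that smooth a node $\eta$ of $C$ (equivalently, the corresponding node $f_\star(\eta)$ of $D$, by the balancing condition) are governed by a single smoothing parameter, and since every component of $C_\star$ except $C_1$ is rigid (three special points) while $C_1$ has exactly four special points with one modulus, the dimension of $\Hbar_n$ at $f_\star$ equals the number of nodes of $C_\star$ plus the one modulus of $C_1$ — matching $\dim\cH_n = n$. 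The key computation is then to identify, inside this tangent space, the locus where the induced deformations of the two stabilized $n$-marked curves $\pi_a(f_\star)$ and $\pi_b(f_\star)$ agree to first order.

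The crucial input is the explicit common stabilization $(X_\star,p_1,\dots,p_n)$ from Lemma \ref{lem:equalizer} (Figure \ref{fig:Xstar}): both $\pi_a(f_\star)$ and $\pi_b(f_\star)$ equal this point of $\Mbar_{0,n}$. I would compare, node by node, how a smoothing parameter of $C_\star$ contributes to smoothing a node of $X_\star$ versus how the corresponding smoothing parameter of $D_\star$ contributes to the same node of $X_\star$. Because $f_\star$ restricted to each component other than $C_1$ is an isomorphism, most of these contributions match up with multiplicity one on both sides, so the diagonal condition $\pi_a = \pi_b$ imposes no constraint there and these directions survive. The components that are contracted under stabilization (the "chains" collapsed to points $p_i$ of $X_\star$, and the component $C_1$ together with its image $D_2$) require a careful local analysis: here the ramification of $f_\star|_{C_1}\colon z\mapsto z^2$ means a smoothing of a node on $C_1$ maps to a smoothing of the corresponding node on $D_2$ with local degree governed by the balancing condition, and I expect exactly this to cut the tangent space of $\Hbar_n$ down by the correct codimension so that $T_{f_\star}\overline{\cE_n}^+$ is $1$-dimensional. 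This is essentially the same style of argument as in \cite{RamadasSilversmith2}, applied to the present configuration.

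Once $\overline{\cE_n}^+$ is shown to be smooth of dimension $1$ at $f_\star$, it has a unique local analytic branch there. To conclude that this branch is $\overline{\cE_n}$ and not an extraneous component of $\overline{\cE_n}^+$ contained in $\Hbar_n\setminus\cH_n$, I would exhibit a one-parameter family in $\cE_n$ (i.e. of honest quadratic rational maps with $n$-periodic critical point, or rather of points of $\cH_n$ landing in $\cE_n$) degenerating to $f_\star$ — concretely, by deforming the modulus of $C_1$ away from cross-ratio $-1$ while simultaneously smoothing all the nodes of $C_\star$ in the compatible way, producing nearby smooth covers in $\cE_n$. This shows $f_\star\in\overline{\cE_n}$; combined with smoothness of the ambient $\overline{\cE_n}^+$ at $f_\star$ and $\dim\overline{\cE_n}=1$, it forces $\overline{\cE_n}$ to coincide with the unique branch, so $f_\star$ is a smooth point of $\overline{\cE_n}$.

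The main obstacle I anticipate is the tangent-space computation at the nodes lying on the ramified component $C_1$ and its image $D_2$: one must correctly account for the local degree-$2$ behavior of $f_\star$ when translating smoothing parameters upstairs to smoothing parameters downstairs, and verify that the resulting linear conditions have exactly the expected rank — neither too few (which would leave $\overline{\cE_n}^+$ singular or too big at $f_\star$) nor too many. Getting the bookkeeping of special points, nodes, and smoothing coordinates consistent across $C_\star$, $D_\star$, and $X_\star$ is where the real work lies.
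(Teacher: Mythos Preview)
Your overall plan---work in node-smoothing coordinates on $\Hbar_n$ near $f_\star$, compute $\pi_a,\pi_b$ in those coordinates, and read off smoothness of the equalizer---is exactly the paper's approach. But the details you sketch are off in ways that would derail the computation.

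First, the dimension and the ``modulus of $C_1$'': $\dim\cH_n=n-2$, not $n$. A formal neighbourhood of $f_\star$ in $\Hbar_n$ has coordinates $s_1,\dots,s_{n-2}$, one for each node of $D_\star$ (equivalently, one for each pair $\eta_j,\eta_j'$ over it, since all nodes are unramified). There is \emph{no} free modulus on $C_1$: the cross-ratio of $(a_1,a_*,\eta_n,\eta_n')$ is forced to be $-1$ by the requirement that $f_\star|_{C_1}$ be $z\mapsto z^2$ with $\eta_n,\eta_n'$ the two preimages of $\theta_1$. Second, there is no ramification at any node of $C_\star$; the only critical points of $f_\star$ are the marked smooth points $a_1,a_*$. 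So the ``main obstacle'' you anticipate (degree-$2$ behaviour at nodes on $C_1$) does not occur, and the balancing condition plays no role in the computation.

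The actual mechanism is an index shift, which your sentence ``most of these contributions match up \dots\ so the diagonal condition imposes no constraint'' misses entirely. With $t_1,\dots,t_{n-3}$ smoothing coordinates on $\Mbar_{0,n}$ at $X_\star$, one finds $\pi_a^*(t_i)=\alpha_i s_i$ but $\pi_b^*(t_i)=\beta_i s_{i+1}$ (units $\alpha_i,\beta_i$): the node $\gamma_i$ of $X_\star$ is separated on the $a$-side by $\eta_{n+1-i}$ but on the $b$-side by $\theta_{n+1-i}=f_\star(\eta_{n-i})$. Thus $\overline{\cE_n}^+$ is locally cut out by $\alpha_i s_i=\beta_i s_{i+1}$ for $i=1,\dots,n-3$, visibly a smooth curve through the origin not contained in any $\{s_i=0\}$. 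This last observation already shows the branch is not in $\Hbar_n\setminus\cH_n$, so $f_\star\in\overline{\cE_n}$; you do not need to separately construct a degenerating one-parameter family.
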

\begin{proof}
 The proof is a computation in local coordinates on $\Hbar_n$, as developed in Sections 3.4 and 3.5 in \cite{RamadasSilversmith2}, based on deformation theory arguments from \cite{DeligneMumford1969, Knudsen1983, HarrisMumford1982}. We base change to $\C$, and use notation from Figures \ref{fig:fstar} and \ref{fig:Xstar}. 

In a (formal) neighbourhood of $(C_{\star}, D_{\star},f_{\star})$, $\Hbar_n$ admits local coordinates $(s_1,\ldots s_{n-2})$, where $s_i$ is a \textit{node-smoothing parameter} for $\eta_{n+1-i}$ -- as well as for $\eta_{n+1-i}'$ and for $f_{\star}(\eta_{n+1-i})$. (This last fact is because all of the nodes $\eta_i$ are unramified.) In a (formal) neighbourhood of $(X_{\star},p_1,\ldots, p_n)$, $\Mbar_{0,n}$ admits local coordinates $t_1,\ldots, t_{n-3}$, where $t_i$ is a node-smoothing parameter for $\gamma_i$. Using Figures \ref{fig:fstar} and \ref{fig:Xstar}, we can write $\pi_a^{*}(t_j)$ and $\pi_b^{*}(t_j)$ in terms of the coordinates $s_i$, as follows. Observe:
\begin{itemize}
    \item On $X_{\star}$, the only node separating $p_1$ and $p_2$ from $p_3,\ldots, p_n$ is $\gamma_1$, for which $t_1$ is a node-smoothing parameter.
    \item On $C_{\star}$, the only node that separates $a_1$ and $a_2$ from $a_3,\ldots, a_n$ is $\eta_n$, for which $s_1$ is a node-smoothing parameter.
    \item On $D_{\star}$, the only node that separates $b_1$ and $b_2$ from $b_3,\ldots, b_n$ is $\theta_n$, for which $s_2$ is a node-smoothing parameter.
\end{itemize}
  We infer that $\pi_a^{*}(t_1)=\alpha_1 s_1$ and $\pi_b^{*}(t_1)=\beta_1 s_2$, where $\alpha_1$ and $\beta_1$ are non-vanishing regular functions on a formal neighbourhood of $f_{\star}\in\Hbar_n$. Similar computations tell us that for all $i=1,2,\ldots n-3$, we have 
\begin{align*}
   \pi_a^{*}(t_i)&=\alpha_i s_i&&\text{and}
   &\pi_b^{*}(t_i)&=\beta_i s_{i+1},
\end{align*}
 where $\alpha_i$ and $\beta_i$ are non-vanishing regular functions on a formal neighbourhood of $f_{\star}$.

In a formal neighbourhood of $f_{\star}$, $\overline{\cE_n}^{+}$ is cut out by the equations $=\pi_a^{*}(t_i)=\pi_b^{*}(t_i)$, which can be rewritten as $\alpha_i s_i=\beta_i s_{i+1}$. (In these coordinates, $f_{\star}$ is identified with the origin.) These equations describe a (germ of a) curve, smooth at the origin, that does not lie in any of the coordinate hypersurfaces $\{s_i=0\}$. On the other hand, in these coordinates, the boundary $\Hbar_n\setminus\cH_n$ is identified with the union of the $(n-2)$ coordinate hypersurfaces $\{s_i=0\}$. We conclude two things: First, near $f_{\star}$, $\overline{\cE_n}^{+}$ does not generically lie in $\Hbar_n\setminus\cH_n$. In other words, $f_{\star}$ is in $\overline{\cE_n}$. Second, $\overline{\cE_n}$ is smooth at $f_{\star}$.  
\end{proof}

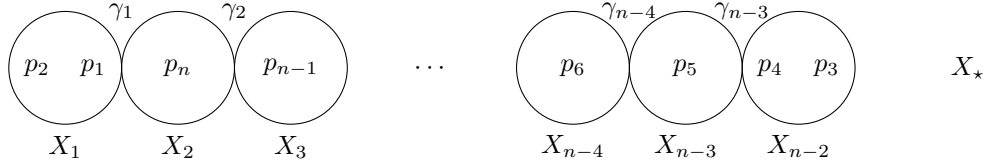
\begin{figure}
    \centering
    \begin{tikzpicture}[scale=1.5]
\foreach \x in {1,2,3,5.5,6.5,7.5} {
\draw (\x+.5,-1) circle(.5);
}
\draw (4.75,-1) node {$\cdots$};
\draw (2,-.5) node {$\gamma_1$};
\draw (3,-.5) node {$\gamma_2$};
\draw (6.5,-.5) node {$\gamma_{n-4}$};
\draw (7.5,-.5) node {$\gamma_{n-3}$};
\draw (1.5,-1.7) node {$X_1$};
\draw (2.5,-1.7) node {$X_2$};
\draw (3.5,-1.7) node {$X_3$};
\draw (6,-1.7) node {$X_{n-4}$};
\draw (7,-1.7) node {$X_{n-3}$};
\draw (8,-1.7) node {$X_{n-2}$};
\draw (1.75,-1) node {$p_1$};
\draw (2.5,-1) node {$p_n$};
\draw (3.5,-1) node {$p_{n-1}$};
\draw (6,-1) node {$p_6$};
\draw (7,-1) node {$p_5$};
\draw (8.25,-1) node {$p_3$};
\draw (7.75,-1) node {$p_4$};
\draw (1.25,-1) node {$p_2$};
\draw (9.5,-1) node {$X_\star$};
\end{tikzpicture}
    \caption{The stable curve $(X_{\star}, p_1,\ldots, p_n)\in\Mbar_{0,n}$ that is the common stabilization of $(C_{\star}, a_1,\ldots, a_n)$ and $(D_{\star}, b_1,\ldots, b_n)$.}
    \label{fig:Xstar}
\end{figure}
%\section{Conclusion}

We tie together the various strands to complete the proof of Theorem \ref{thm:gleasontopern}.

\begin{proof}[Proof of Theorem \ref{thm:gleasontopern}] 
Suppose that $G_n$ is irreducible over $\QQ$. Then by Theorem \ref{thm:gleasonimpliesperirreducibleoverQ}, $\Per_n(0)$ is irreducible over $\QQ$. By Corollary \ref{cor:PerntoEn}, $\cE_n$ is irreducible over $\QQ$, and therefore so is  $\overline{\cE_n}$. Since $\overline{\cE_n}$ is $\QQ$-irreducible, the Galois group acts transitively on its set of $\CC$-irreducible components. The point $f_{\star}\in \overline{\cE_n}$, being $\QQ$-rational \ref{lem:Qrational}, must therefore lie on every $\CC$-irreducible component. On the other hand, $f_{\star}$ is a smooth point \ref{lem:smoothpoint} and so can lie on at most one $\CC$-irreducible component. We conclude that $\overline{\cE_n}$ is irreducible over $\CC$. By Corollary \ref{cor:PerntoEn}, $\Per_n(0)$ is irreducible over $\CC$.
 \end{proof}

\begin{remark}[Proof of Theorem \ref{prop:Qrationalsmoothpoint}] We have not quite proved Theorem \ref{prop:Qrationalsmoothpoint} --- doing so was not strictly necessary for Theorem \ref{thm:gleasontopern}. $\overline{\cE_n}$ has a smooth $\QQ$-rational point, but it is not a completion of $\Per_n(0)$, merely a birational model. In fact, using notation from the proof of Lemma \ref{lem:birtaional}, $\nu:\cE_n\to\Per_n(0)$ identifies pairs of points where both critical points are $n$-periodic. We prove Theorem \ref{prop:Qrationalsmoothpoint} by glueing $\Per_n(0)$ to an open subset of $\overline{\cE_n}$ along a common open subset. Let $S=\nu^{-1}(\Per_n(0)\setminus\Per_n(0)^{\circ})$; $S$ is a $0$-dimensional $\QQ$-subscheme of $\cE_n$ and therefore also of $\overline{\cE_n}$. Let $\overline{\cE_n}^{\circ}:=\overline{\cE_n}\setminus S$, and note that $\cE_n\setminus S$ is open in $\overline{\cE_n}^{\circ}$. By the proof of Lemma \ref{lem:birtaional}, the restriction $\nu:\cE_n\setminus S\to \Per_n(0)^{\circ}$ is an isomorphism. We glue $\overline{\cE_n}^{\circ}$ with $\Per_n(0)$ along their shared open set $\cE_n\setminus S\cong \Per_n(0)^{\circ}$ to obtain a projective completion of $\Per_n(0)$, on which $f_{\star}$ is a smooth $\QQ$-rational point.

\end{remark}
\bibliographystyle{alpha} 
\bibliography{BigRefs.bib}
\end{document}